\newtheorem{thm}{Theorem}
\newtheorem{defn}{Definition}
\newtheorem{lemma}{Lemma}
\newtheorem{pro}{Proposition}
\newtheorem{rk}{Remark}
\newtheorem{cor}{Corollary}
\newtheorem{ex}{Example}
\numberwithin{equation}{section} \setcounter{tocdepth}{1}
\begin{document}

\title[Gibbs measure theory]{Kolmogorov extension theorem for non-probability measures on Cayley trees}

\author{F.~H.~Haydarov}

\address{New Uzbekistan University,
54, Mustaqillik Ave., Tashkent, 100007, Uzbekistan,}

\address{AKFA University, 264, Milliy Bog street,  Yangiobod QFY, Barkamol MFY,
      Kibray district, 111221, Tashkent region, Uzbekistan,}

\address{V.I.Romanovskiy Institute of Mathematics,
9, University str. Tashkent, 100174, Uzbekistan.}

\address{National University of Uzbekistan,
 University str 4 Olmazor district, Tashkent, 100174, Uzbekistan.}

\email {haydarov\_imc@mail.ru}

\begin{abstract}
In this paper, we shall discuss the extendability of probability and non-probability measures on Cayley trees to a $\sigma$-additive measure on Borel fields which has a fundamental role in the theory of Gibbs measures.
\end{abstract}
\maketitle

{\bf Mathematics Subject Classifications (2010).} 60K35
(primary); 82B05, 82B20 (secondary)

{\bf{Key words.}} Cayley tree, cylinder sets, Kolmogorov's extension theorem, spin values, non-probability measures.

\section{Introduction}

In classical physics, the Kolmogorov extension theorem lays the foundation
for the theory of stochastic processes. It has been known for a long time
that, in its original form, this theorem does not hold in quantum mechanics (e.g., \cite{4}).

Foundations of measure theory provide little support for compositional reasoning. Standard formalizations of iterative processes prefer to construct a single monolithic sample space from which all random choices are made at once. The central result in this regard is the Kolmogorov's extension theorem, which identifies conditions under which a family of measures on finite subproducts of an infinite product space extend to a measure on the whole space. This theorem is typically used to construct a large sample space for an infinite iterative process when the behavior of each individual step of the process is known (see \cite{5, 6}).

If the real line $\mathbb{R}$ and the $\sigma$-field of all Borel sets, and each family of probability measures $\mu_n, n\in\mathbb{N}$ which satisfies consistency condition, Kolmogorov proved such an extension is uniquely possible. Later Bochner generalized the result from the infinite product of measurable spaces to the projective limit of measurable spaces \cite{7}. Bochner's formulation is applicable to the infinite product of measurable spaces as a special case. Namely, if domain of probability measures are a locally compact, $\sigma$-compact metric space with the $\sigma$-field of all Borel sets, Bochner proved the extendability and its uniqueness for a consistent family of probability measures. (Of course, this result can be applied for a special case of infinite product measurable space).

Instead of locally compactness and $\sigma$-compactness, if we assume that each $\Omega_n$ is a complete metric space, the extendability and its uniqueness is proved for a family of probability measures (e.g. \cite{8}). Above results are still valid even if we replace a sequence of measures by a family of measures indexed by elements of an ordered set.

Generally speaking, there are some versions of Kolmogorov's extension theorem for probability measures. But some problems reduced to Kolmogorov's theorem for non-probability measures and it is impossible to apply the theorem for any infinite measures.  In this paper, we give analogue of Kolmogorov's theorem for probability measures on Cayley trees and prove the theroem for a certain class of infinite measures.

\section{Cylindric sets on Cayley trees}

The Cayley tree $\Im^{k}=(V, L)$ of order $k \geq 1$ is an infinite tree, i.e. graph without cycles, each vertex of which has exactly $k+1$ edges. Here $V$ is the set of vertices of $\Im^{k}$ va $L$ is the set of its edges.

%For $l \in L$ its endpoints $x, y \in V$ are called nearest neighbor vertices and denoted by $l=\langle x, y\rangle$.

 Consider models where the spin takes values in the set $\Phi$ (finite or denumerable),
and is assigned to the vertices of the tree. For $A\subset V$ a
configuration $\sigma_A$ on $A$ is an arbitrary function $\sigma_A:A\to
\Phi$. Let $\Omega_A=\Phi^A$ be the set of all configurations
on $A$. A configuration $\sigma$ on $V$ is defined as a function
$x\in V\mapsto\sigma (x)\in \Phi$; the set of all configurations
is $\Omega:=\Phi^V$. We consider all elements of $V$ are numerated (in any order) by the numbers: $0,1,2,3,...$. Namely, we can write $V=\{x_0, x_1, x_2, ....\}$ (detail in \cite{11, 9, 10}).

 Let $\mathcal{X}_A$ be the indicator function. $\Omega$ can be considered as a metric space with respect to the metric $\rho: \Omega \times \Omega \rightarrow \mathbb{R}^{+}$ given by
$$
\rho\left(\left\{\sigma(x_n)\right\}_{x_n \in V},\left\{\sigma^{\prime}(x_n)\right\}_{x_n \in V}\right)=\sum_{n \geq 0} 2^{-n}\mathcal{X}_{\sigma(x_n)\neq\sigma^{\prime}(x_n)}
$$
(or any equivalent metric the reader might prefer, this metric taken from \cite{2}), and let $\mathcal{B}$ be the $\sigma$-field of Borel subsets of $\Omega$.

For each $m \geq 0$ let $\pi_m: \Omega \rightarrow\Phi^{m+1}$ be given by $\pi_m\left(\sigma_0, \sigma_1, \sigma_2, ...\right)=\left(\sigma_0, \ldots, \sigma_m\right)$ and let $\mathcal{C}_m=\pi_m^{-1}\left(\mathcal{P}\left(\Phi^{m+1}\right)\right)$, where $\sigma_i:=\sigma(x_i)$ and $\mathcal{P}\left(\Phi^{m+1}\right)$ is the family of all subsets of $\Phi^{m+1}$ (Cartesian product of $\Phi$). Then $\mathcal{C}_m$ is a field and each of the sets in $\mathcal{C}_m$ is open and closed set in the metric space $(\Omega, \rho)$; also $\mathcal{C}_m \subset \mathcal{C}_{m+1}$. Let $\mathcal{C}=\bigcup_{m \geq 0} \mathcal{C}_m$; then $\mathcal{C}$ is a field (the field of \textbf{cylinder sets}) and each of the sets in $\mathcal{C}$ is both open and closed.
Denote  $\mathcal{S}(\mathcal{C})$ - the smallest sigma field containing $\mathcal{C}$. Every element of $\mathcal{S}(\mathcal{C})$ is called ``\textbf{measurable cylinder}". Put
$$
\sigma^{(m)}(q)=\left\{\sigma\in\Omega :\;
\sigma\big|_{\{x_m\}}=q\in \Phi\right\}.
$$

\begin{defn}\label{coun} A measurable space $(X, \mathcal{E})$ is said to be countably generated if $\mathcal{E}=\sigma(\mathcal{I})$ for some countable subset $\mathcal{I}$ of $\mathcal{E}$.
\end{defn}

\begin{pro}\label{L 16.1} $\mathcal{B}=\mathcal{S}(\mathcal{C})=\mathcal{S}\left(\left\{\sigma^{(m)}(q): m \geq 0,\ q\in \Phi\right\}\right)$ and in particular if $|\Phi|<\infty$ then $(\Omega, \mathcal{B})$ is countably generated.
\end{pro}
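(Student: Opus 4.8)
The plan is to prove the three $\sigma$-fields equal by a short cycle of inclusions and then read off the countable-generation statement. It is convenient to write, for $m\ge 0$ and $(q_0,\ldots,q_m)\in\Phi^{m+1}$,
$[q_0,\ldots,q_m]:=\bigcap_{i=0}^{m}\sigma^{(i)}(q_i)=\pi_m^{-1}\big(\{(q_0,\ldots,q_m)\}\big)$
for the atomic cylinder fixing the first $m+1$ spins, and to let $\mathcal{A}$ be the family of all such sets. Since $V$ is countable and $\Phi$ is finite or denumerable, each $\Phi^{m+1}$ is countable, so $\mathcal{A}$ is a countable subfamily of $\mathcal{C}$; this countability is the ultimate source of countable generation.

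First I would dispose of $\mathcal{S}(\mathcal{C})=\mathcal{S}\big(\{\sigma^{(m)}(q):m\ge0,\ q\in\Phi\}\big)$. The inclusion $\supseteq$ is immediate because $\sigma^{(m)}(q)=\pi_m^{-1}\big(\{(a_0,\ldots,a_m)\in\Phi^{m+1}:a_m=q\}\big)\in\mathcal{C}_m$. For $\subseteq$, observe that a generic element of $\mathcal{C}_m$ is $\pi_m^{-1}(S)$ with $S\subseteq\Phi^{m+1}$, and $\pi_m^{-1}(S)=\bigcup_{(q_0,\ldots,q_m)\in S}[q_0,\ldots,q_m]$ is a countable union (finite when $|\Phi|<\infty$) of sets that already lie in $\mathcal{S}\big(\{\sigma^{(m)}(q)\}\big)$, since each $[q_0,\ldots,q_m]$ is a finite intersection of sets $\sigma^{(i)}(q_i)$. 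Hence $\mathcal{C}=\bigcup_m\mathcal{C}_m\subseteq\mathcal{S}\big(\{\sigma^{(m)}(q)\}\big)$ and the two generated $\sigma$-fields coincide. The inclusion $\mathcal{S}(\mathcal{C})\subseteq\mathcal{B}$ then costs nothing: every set of $\mathcal{C}$ is open in $(\Omega,\rho)$, as recorded above, hence Borel.

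The substantive step is $\mathcal{B}\subseteq\mathcal{S}(\mathcal{C})$, and I would obtain it by showing $\mathcal{A}$ is a basis for the topology of $(\Omega,\rho)$. Given $\sigma\in\Omega$ and $r>0$, choose $m$ with $2^{-m}<r$; then any $\sigma'$ agreeing with $\sigma$ on $x_0,\ldots,x_m$ satisfies $\rho(\sigma,\sigma')\le\sum_{n\ge m+1}2^{-n}=2^{-m}<r$, so $\sigma\in[\sigma(x_0),\ldots,\sigma(x_m)]\subseteq\{\sigma'':\rho(\sigma,\sigma'')<r\}$. Consequently every open set $G$ is the union of those members of $\mathcal{A}$ it contains; since $\mathcal{A}$ is countable this exhibits $G$ as a countable union of cylinder sets, whence $G\in\mathcal{S}(\mathcal{C})$. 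As $\mathcal{B}$ is generated by the open sets, $\mathcal{B}\subseteq\mathcal{S}(\mathcal{C})$, closing the cycle and giving $\mathcal{B}=\mathcal{S}(\mathcal{C})=\mathcal{S}\big(\{\sigma^{(m)}(q)\}\big)$.

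Finally, when $|\Phi|<\infty$ (indeed whenever $\Phi$ is at most countable) the generating family $\{\sigma^{(m)}(q):m\ge0,\ q\in\Phi\}$ is countable, so by the equalities just proved $\mathcal{B}$ is generated by a countable subset of itself, i.e.\ $(\Omega,\mathcal{B})$ is countably generated in the sense of Definition \ref{coun}. I expect the only genuinely delicate point to be the final part of the basis argument — passing from ``$G$ is a union of basic cylinders'' to ``$G$ is a \emph{countable} union of basic cylinders'' — which is exactly where finiteness (or countability) of $\Phi$, equivalently separability of $(\Omega,\rho)$, enters; everything else is routine bookkeeping with the preimages $\pi_m^{-1}$.
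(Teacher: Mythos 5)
Your proposal is correct and follows essentially the same route as the paper: cylinder sets are open and form a base for the topology of $(\Omega,\rho)$, giving $\mathcal{B}=\mathcal{S}(\mathcal{C})$, and cylinders are built from the single-coordinate sets $\sigma^{(m)}(q)$, giving the second equality and hence countable generation when $|\Phi|<\infty$. You are in fact a bit more careful than the paper on two points it glosses over — you justify that every open set is a \emph{countable} union of atomic cylinders (via countability of your family $\mathcal{A}$), and you note that a general element of $\mathcal{C}$ is a countable union of finite intersections of the $\sigma^{(m)}(q)$, not merely a finite intersection.
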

\begin{proof} Let $\mathcal{O}$ be the set of all open subsets of $\Omega$. Then $\mathcal{C}$ (if $|\Phi|<\infty$ then $\mathcal{C}$ is a countable set) a base for the topology on the metric space $(\Omega, \rho)$. Also, since $\mathcal{C}$ is the field, each $O \in \mathcal{O}$ can be written as a union of elements from $\mathcal{C}$. Hence $\mathcal{O} \subset \mathcal{S}(\mathcal{C})$ and thus $\mathcal{B}=\mathcal{S}(\mathcal{O}) \subset \mathcal{S}(\mathcal{S}(\mathcal{C}))=\mathcal{S}(\mathcal{C})$, i.e., $\mathcal{B}=\mathcal{S}(\mathcal{C})$. Moreover, each element of $\mathcal{C}$ can be written as a finite intersection of elements from the set $\left\{\sigma^{(m)}(q): m \geq 0,\ q\in \Phi\right\}$ and it therefore follows that $$\mathcal{C} \subset \mathcal{S}\left(\left\{\sigma^{(m)}(q): m \geq 0,\ q\in \Phi\right\}\right).$$ This implies that $\mathcal{B}=\mathcal{S}\left(\left\{\sigma^{(m)}(q): m \geq 0,\ q\in \Phi\right\}\right)$.
\end{proof}

For a fixed $x^{0} \in V$ we put
$$
W_{n}=\left\{x \in V \mid d\left(x, x^{0}\right)=n\right\}, \quad V_{n}=\bigcup_{m=0} ^{n} W_{m}, $$
%
%\quad L_{n}=\left\{\langle x, y\rangle \in L \mid x, y \in V_{n}\right\},
%$$
where $d(x, y)$ is the distance between the vertices $x$ and $y$ on the Cayley tree, i.e. the number of edges of the shortest walk (i.e., path) connecting vertices $x$ and $y$.

% We will write $x \prec y$ if the path from $x^{0}$ to $y$ passes through $x$. A vertex $y$ is called a direct successors of a vertex $x$ if $y \succ x$ and $x, y$ are nearest neighbors.

% The set $S(x)$ of direct successors of the vertex $x$ is defined as follows. If $x \in W_{n}$ then
%$$
%S(x)=\left\{y_{i} \in W_{n+1} \mid d\left(x, y_{i}\right)=1, i=1,2, \ldots, k \right\}.
%$$

For any fixed configuration $\sigma_{A}\in\Omega_{A}, \ A\subset V$ we denote:
$$\bar{\sigma}_{A}:=\left\{\sigma\in\Omega :\;
\sigma\big|_{A}=\sigma_A\right\}.$$

\begin{cor}\label{muhim}
$\mathcal{B}=\mathcal{S}\left(\left\{\bar{\sigma}_{V_n}: n\in \mathbb{N}_{0}\right\}\right)$.
\end{cor}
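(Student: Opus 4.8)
The plan is to deduce Corollary~\ref{muhim} from Proposition~\ref{L 16.1} by showing that the two generating families
$$
\mathcal{G}_1=\left\{\sigma^{(m)}(q): m\ge 0,\ q\in\Phi\right\},\qquad
\mathcal{G}_2=\left\{\bar{\sigma}_{V_n}: n\in\mathbb{N}_0\right\}
$$
generate the same $\sigma$-field, i.e. $\mathcal{S}(\mathcal{G}_1)=\mathcal{S}(\mathcal{G}_2)$. Since Proposition~\ref{L 16.1} already gives $\mathcal{B}=\mathcal{S}(\mathcal{G}_1)$, it suffices to prove the two inclusions $\mathcal{G}_2\subset\mathcal{S}(\mathcal{G}_1)$ and $\mathcal{G}_1\subset\mathcal{S}(\mathcal{G}_2)$.

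First I would handle $\mathcal{G}_2\subset\mathcal{S}(\mathcal{G}_1)$. Fix $n$ and a configuration $\sigma_{V_n}\in\Omega_{V_n}$. Because $V_n$ is a \emph{finite} subset of $V$, each vertex of $V_n$ is $x_m$ for some index $m$, so there is a finite set of indices $I_n\subset\mathbb{N}_0$ with $V_n=\{x_m:m\in I_n\}$. Then $\bar\sigma_{V_n}=\bigcap_{m\in I_n}\sigma^{(m)}(\sigma_{V_n}(x_m))$, a finite intersection of members of $\mathcal{G}_1$; hence $\bar\sigma_{V_n}\in\mathcal{S}(\mathcal{G}_1)$, giving the first inclusion.

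For the reverse inclusion $\mathcal{G}_1\subset\mathcal{S}(\mathcal{G}_2)$, fix $m\ge 0$ and $q\in\Phi$. Choose $n$ large enough that $x_m\in V_n$ (possible since $V=\bigcup_n V_n$ and each $V_n$ is finite, so every $x_m$ eventually appears). Then
$$
\sigma^{(m)}(q)=\bigcup\left\{\bar\sigma_{V_n}:\ \sigma_{V_n}\in\Omega_{V_n},\ \sigma_{V_n}(x_m)=q\right\},
$$
and the sets $\bar\sigma_{V_n}$ over all $\sigma_{V_n}\in\Omega_{V_n}$ partition $\Omega$, so this is a union of members of $\mathcal{G}_2$; the only subtlety is whether the union is countable, which I would address by noting that if $|\Phi|<\infty$ it is plainly finite, and if $\Phi$ is denumerable then $\Omega_{V_n}=\Phi^{V_n}$ is still countable (finite power of a countable set), so the union lies in $\mathcal{S}(\mathcal{G}_2)$ in either case. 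This yields $\mathcal{G}_1\subset\mathcal{S}(\mathcal{G}_2)$, and combining the two inclusions with Proposition~\ref{L 16.1} gives $\mathcal{B}=\mathcal{S}(\mathcal{G}_1)=\mathcal{S}(\mathcal{G}_2)$, as desired.

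The only real obstacle is the bookkeeping point in the last step: expressing the "thin" cylinder $\sigma^{(m)}(q)$, which constrains a single coordinate, as a countable union of the "thick" cylinders $\bar\sigma_{V_n}$, which pin down all coordinates on $V_n$ at once. Everything hinges on $V_n$ being finite (so the indexing set $\Omega_{V_n}$ is countable) and on $\bigcup_n V_n=V$ (so every coordinate $x_m$ is captured for $n$ large); both facts are immediate from the definitions of $W_m$ and $V_n$ on the Cayley tree. No completeness, compactness, or measure-theoretic input beyond Proposition~\ref{L 16.1} is needed.
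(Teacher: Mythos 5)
Your proof is correct, and on the delicate half of the argument it actually diverges from (and improves on) the paper's own. The overall skeleton is the same: both you and the paper reduce to Proposition~\ref{L 16.1} and prove two inclusions, and your first inclusion (each $\bar\sigma_{V_n}$ is a finite intersection $\bigcap_{x_m\in V_n}\sigma^{(m)}(\sigma_{V_n}(x_m))$, using finiteness of $V_n$) is exactly the paper's. For the reverse inclusion, however, the paper asserts that if the bases of two cylinders $\bar\omega_{V_{n_0}},\bar\nu_{V_{n_0}}$ coincide only at $x_{m_0}$ with common value $q_0$, then $\sigma^{(m_0)}(q_0)=\bar\omega_{V_{n_0}}\cap\bar\nu_{V_{n_0}}$; as written this cannot hold, since two cylinders over the same volume $V_{n_0}$ with distinct base configurations are disjoint, so that intersection is empty (and in any case a single intersection of thick cylinders is too small to equal the thin cylinder $\sigma^{(m_0)}(q_0)$). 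Your route replaces this by the correct decomposition $\sigma^{(m)}(q)=\bigcup\{\bar\sigma_{V_n}:\sigma_{V_n}\in\Omega_{V_n},\ \sigma_{V_n}(x_m)=q\}$ for any $n$ with $x_m\in V_n$, together with the observation that $\Omega_{V_n}=\Phi^{V_n}$ is countable (finite or denumerable $\Phi$, finite $V_n$), so the union stays inside the generated $\sigma$-field. This is the standard "thin cylinder as a countable union of thick cylinders" argument, it needs the countability hypothesis on $\Phi$ explicitly (which you check), and it is the step the paper's proof should contain; the rest of your write-up, including the use of $V=\bigcup_n V_n$ to capture every coordinate, matches the paper.
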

\begin{proof} By Proposition \ref{L 16.1} we have
$$\bar{\sigma}_{V_n}=\bigcap_{s_i\in V_n}\sigma^{(s_i)}(\bar{\sigma}_{V_n}(s_i))\in \mathcal{S}(\{\sigma^{(m)}(q): m\geq 0, \ q\in\Phi\})=\mathcal{B}.$$
Then for all $n\in\mathbb{N}$ we obtain that $\bar{\sigma}_{V_n}\in\mathcal{B},$ i.e. $\mathcal{S}\left(\left\{\bar{\sigma}_{V_n}: n\in \mathbb{N}\right\}\right)\subseteq \mathcal{B}.$

 On the other hand, we show that  $\mathcal{B}\subseteq\mathcal{S}\left(\left\{\bar{\sigma}_{V_n}: n\in \mathbb{N}\right\}\right).$ Let $m_0\geq 0$ and we can find $n_0\in\mathbb{N}$ such that $x_{m_0}\in V_{n_0}$. If bases of cylinder sets $\bar{\omega}_{V_{n_0}}, \bar{\nu}_{V_{n_{0}}}$ coincide with each other only at $\{x_{m_0}\}$ and its value be $q_0\in \Phi$ then we obtain that  $$\sigma^{(m_0)}(q_0)=\bar{\omega}_{V_{n_0}}\cap\ \bar{\nu}_{V_{n_0}}\in\mathcal{S}\left(\left\{\bar{\sigma}_{V_n}: n\in \mathbb{N}\right\}\right).$$
 From $m_0$ and $q_0$ are arbitrary numbers and Proposition \ref{L 16.1} we can conclude that  $\mathcal{B}\subseteq\mathcal{S}\left(\left\{\bar{\sigma}_{V_n}: n\in \mathbb{N}\right\}\right).$
\end{proof}

Note that Corrollary \ref{muhim} is very important in the theory of Gibbs measures (see \cite{9, 10}) and a family of sets $\{V_n\}_{n=1}^{\infty}$ is also \textbf{cofinal} sets \cite{g}.

\section{Analogue of Kolmogorov's Extension Theorem}
In this section we give an analogue of Kolmogorov's extension theorem for probability measures on Cayley trees. Actually, we prove the theorem for probability measures on $V_n, n\in\mathbb{N}$ and it gives us more short and understandable proof of the theorem.

\begin{lemma}\label{1.2.8} \cite{1} Let $\mu$ be a finitely additive set function on a field $\mathcal{F}$. Assume that $\mu$ is continuous from above at the empty set, that is, if $A_1, A_2, \ldots, \in \mathcal{F}$ and $A_n \downarrow \emptyset$, then $\mu\left(A_n\right) \rightarrow 0$. Then $\mu$ is countably additive on $\mathcal{F}$.
\end{lemma}
\begin{comment}
\begin{proof} Let $A_1, A_2, \ldots$ be disjoint sets in $\mathcal{F}$ whose union $A$ belongs to $\mathcal{F}$, and let $B_n=\bigcup_{i=1}^n A_i$. Then $\mu(A)=\mu\left(B_n\right)+\mu\left(A-B_n\right)$; but $A-B_n \downarrow \emptyset$, so by hypothesis, $\mu\left(A-B_n\right) \rightarrow 0$. By finite additivity $\mu\left(B_n\right)=\sum_{i=1}^n \mu\left(A_i\right)$, we get $\mu(A)=\lim _{n \rightarrow \infty} \sum_{i=1}^n \mu\left(A_i\right)$, i.e. $\mu(A)=\sum_{n=1}^{\infty}\left(A_n\right)$. \end{proof}
\end{comment}
\begin{lemma}\label{1.4.11} If $\mu$ is a $\sigma$-finite measure on $\mathcal{B}$, then for each $B \in \mathcal{B}$, $$\mu(B)=\sup \{\mu(K): K \subset B, K \ \textrm{compact} \}.$$
\end{lemma}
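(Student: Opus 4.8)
The plan is to prove this inner-regularity (tightness) statement first for a finite measure $\mu$ and then bootstrap to the $\sigma$-finite case. Two structural facts about $\Omega=\Phi^{V}$ drive the argument: $(\Omega,\rho)$ is a (complete, separable) metric space, so the good-sets argument below makes every finite Borel measure on it regular by closed and open sets; and, by Tychonoff's theorem, any set of the form $\prod_{n\ge 0}A_{n}$ with each $A_{n}\subseteq\Phi$ finite is a compact subset of $\Omega$, while every closed subset of a compact set is again compact. When $\Phi$ is finite these facts collapse to ``$\Omega$ itself is compact'', but for denumerable $\Phi$ the space $\Omega$ is not compact, and the Tychonoff remark is what will let us produce compacta.

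Assume first $\mu(\Omega)<\infty$. For \textbf{tightness}, fix $\varepsilon>0$; for each $n$ the cylinders $\{\sigma:\sigma(x_{n})=q\}$, $q\in\Phi$, partition $\Omega$, so since $\Phi$ is at most countable one may pick a finite $A_{n}\subseteq\Phi$ with $\mu(\{\sigma:\sigma(x_{n})\notin A_{n}\})<\varepsilon 2^{-n-1}$ (taking $A_{n}=\Phi$ if $\Phi$ is finite). Then $C:=\{\sigma:\sigma(x_{n})\in A_{n}\ \forall n\}=\prod_{n}A_{n}$ is compact and $\mu(\Omega\setminus C)\le\sum_{n}\mu(\{\sigma:\sigma(x_{n})\notin A_{n}\})<\varepsilon/2$. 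For \textbf{regularity by closed sets}, let $\mathcal{A}$ be the class of $B\in\mathcal{B}$ admitting, for every $\varepsilon>0$, a closed $F$ and an open $G$ with $F\subseteq B\subseteq G$ and $\mu(G\setminus F)<\varepsilon$. Using finiteness and continuity from above (via $G_{n}=\{x:\rho(x,C)<1/n\}\downarrow C$ for $C$ closed), every closed set lies in $\mathcal{A}$; $\mathcal{A}$ is stable under complements (pass to $G^{c}\subseteq B^{c}\subseteq F^{c}$) and, using continuity from below, under countable unions (union the $G_{i}$ and take a finite subunion of the $F_{i}$). Hence $\mathcal{A}=\mathcal{B}$. \textbf{Combining}: given $B\in\mathcal B$ and $\varepsilon>0$, take $C$ as above and a closed $F\subseteq B$ with $\mu(B\setminus F)<\varepsilon/2$; then $K:=F\cap C$ is a closed subset of the compact set $C$, hence compact, $K\subseteq B$, and $\mu(B\setminus K)\le\mu(B\setminus F)+\mu(\Omega\setminus C)<\varepsilon$, so $\mu(K)>\mu(B)-\varepsilon$.

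For the $\sigma$-finite case, write $\Omega=\bigsqcup_{k\ge 1}\Omega_{k}$ with $\Omega_{k}\in\mathcal{B}$ pairwise disjoint and $\mu(\Omega_{k})<\infty$, and set $\mu_{k}(\cdot)=\mu(\cdot\cap\Omega_{k})$, a finite measure on $\mathcal{B}$. Applying the finite case to $\mu_{k}$ and to the set $B\cap\Omega_{k}$ yields a compact $K_{k}\subseteq B\cap\Omega_{k}$ with $\mu(K_{k})=\mu_{k}(K_{k})>\mu(B\cap\Omega_{k})-\varepsilon 2^{-k}$; the $K_{k}$ are disjoint subsets of $B$, so $\bigcup_{k=1}^{N}K_{k}$ is compact, lies in $B$, and has $\mu$-measure $\sum_{k=1}^{N}\mu(K_{k})>\sum_{k=1}^{N}\mu(B\cap\Omega_{k})-\varepsilon$. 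Letting $N\to\infty$ gives $\sup\{\mu(K):K\subseteq B,\ K\text{ compact}\}\ge\mu(B)-\varepsilon$ (read as $+\infty$ when $\mu(B)=\infty$), while the reverse inequality is immediate from monotonicity.

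I expect the only genuinely delicate point to be the interplay, in the denumerable-$\Phi$ case, between ``closed'' (which is all the good-sets machinery directly delivers) and ``compact'' (which is what the lemma demands): the tightness construction above, resting on Tychonoff's theorem, is precisely the bridge, and intersecting the closed approximant $F$ with the tight compact set $C$ closes the gap. Everything else is the routine regularity argument for finite Borel measures on metric spaces, and the $\sigma$-finite reduction is bookkeeping.
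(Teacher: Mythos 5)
Your proof is correct, but it takes a genuinely different route from the paper's. The paper argues directly on the class $\mathcal{I}$ of sets that are inner regular by compact sets: it shows $\mathcal{I}$ is a monotone class (increasing limits via an $\varepsilon$-approximation with increasing compacta, decreasing limits via $\varepsilon 2^{-n}$ and the intersection $K=\bigcap_n K_n$), anchors it at the cylinders with single-point base --- which it treats as compact --- and then invokes Proposition \ref{L 16.1} to conclude $\mathcal{I}=\mathcal{B}$; the $\sigma$-finite case is handled by an increasing sequence of finite-measure pieces $\mu_i(A)=\mu(A\cap B_i)$. You instead prove the two standard ingredients separately: closed/open regularity of any finite Borel measure on the metric space $(\Omega,\rho)$ by a good-sets class that is genuinely a $\sigma$-field (closed under complements and countable unions), and tightness by exhibiting the compact set $C=\prod_n A_n$ via Tychonoff, then take $K=F\cap C$; the $\sigma$-finite case is done by a disjoint decomposition and finite unions of the resulting compacta. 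What your route buys is robustness: when $\Phi$ is denumerable the cylinder $\sigma^{(m)}(q)=\{q\}\times\Phi^{V\setminus\{x_m\}}$ is \emph{not} compact, so the paper's anchoring step needs exactly the kind of tightness construction you supply, and your $\sigma$-field argument also sidesteps the fact that the single-point-base cylinders do not form a field (which the monotone class theorem, strictly speaking, requires); the paper's route is shorter when $\Phi$ is finite, since then $\Omega$ itself is compact. Two trivial slips on your side: $\sum_{n\ge 0}\varepsilon 2^{-n-1}=\varepsilon$, not $\varepsilon/2$ (use $\varepsilon 2^{-n-2}$), and you should note explicitly that $\rho$ induces the product topology, so Tychonoff compactness of $\prod_n A_n$ is compactness in $(\Omega,\rho)$; neither affects the argument.
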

\begin{proof} First assume that $\mu$ is finite. Let $\mathcal{I}$ be the class of subsets of $\Omega$ having the desired property; we show that $\mathcal{I}$ is a monotone class. Indeed, let $B_n \in \mathcal{I}, B_n \uparrow B$. Let $K_n$ be a compact subset of $B_n$ with $\mu\left(B_n\right)$ $\leq \mu\left(K_n\right)+\varepsilon, \varepsilon>0$ preassigned. By replacing $K_n$ by $\bigcup_{i=1}^n K_i$, we may assume the $K_n$ form an increasing sequence. Then $\mu(B)=\lim _{n \rightarrow \infty} \mu\left(B_n\right)$ $\leq \lim _{n \rightarrow \infty} \mu\left(K_n\right)+\varepsilon$, so that
$\mu(B)=\sup \{\mu(K): K$ a compact subset of $B\}$,
and $B \in \mathcal{I}$. If $B_n \in \mathcal{I}, B_n \downarrow B$, let $K_n$ be a compact subset of $B_n$ such that $\mu\left(B_n\right) \leq \mu\left(K_n\right)+\varepsilon 2^{-n}$, and set $K=\bigcap_{n=1}^{\infty} K_n$. Then $K\subset B$ and
$$
\mu(B)-\mu(K)=\mu(B\setminus K) \leq \mu\left(\bigcup_{n=1}^{\infty}\left(B_n\setminus K_n\right)\right) \leq \sum_{n=1}^{\infty} \mu\left(B_n\setminus K_n\right) \leq \varepsilon ;
$$
thus $B \in \mathcal{I}$. Also, $\mathcal{I}$ is a monotone class containing all cylinders with single point base (one point set is always compact in any topological space). Hence by Proposition \ref{L 16.1}, we obtain that $\mathcal{I}$ contains all Borel sets, i.e. $\mathcal{I}=\mathcal{B}$.

If $\mu$ is $\sigma$-finite, each $B \in \mathcal{B}$ is the limit of an increasing sequence of sets $B_i$ of finite measure. Each $B_i$ can be approximated from compact sets (apply the previous argument to the measure given by $\mu_i(A)=\mu\left(A \cap B_i\right), A \in \mathcal{B}$) and the preceding argument that $\mathcal{B}$ is closed under limits of increasing sequences shows that $B \in \mathcal{B}$.
\end{proof}

We now consider the problem of constructing $\sigma$-finite measures on $\mathcal{B}$. The approach will be as follows: Let $\Lambda=\{x_0, x_1, ..., x_n\}$ be a finite subset of $V$, where $x_0<x_1<\cdots<x_n$ (some fixed total ordering on $V$). Assume that for each such $\Lambda$ we are given a probability measure $\mu_{\Lambda}$ on $\mathcal{B}_\Lambda$, where  $\mathcal{B}_\Lambda$ is the minimal $\sigma$-field of Borel subsets of $\Omega^\Lambda$ (the set of all configurations on $\Lambda$). We denote by $\mathcal{N}$ the set of all finite subsets of $V$.

\begin{thm} \label{cara}\cite{1}\textbf{(Carath\'{e}odory Extension Theorem)} Let $\mu$ be a measure on the field $\mathcal{F}_0$ of subsets of $\Omega$, and assume that $\mu$ is $\sigma$-finite on $\mathcal{F}_0$, so that $\Omega$ can be decomposed as $\bigcup_{n=1}^{\infty} A_n$, where $A_n \in \mathcal{F}_0$ and $\mu\left(A_n\right)<\infty$ for all $n$. Then $\mu$ has a unique extension to a measure on the minimal $\sigma$-field $\mathcal{F}$ over $\mathcal{F}_0$.
\end{thm}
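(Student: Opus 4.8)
The plan is to run the classical Carath\'eodory outer-measure construction. First I would define, for an arbitrary subset $E\subseteq\Omega$,
$$\mu^*(E)=\inf\left\{\sum_{i=1}^{\infty}\mu(B_i): B_i\in\mathcal{F}_0,\ E\subseteq\bigcup_{i=1}^{\infty}B_i\right\},$$
with the convention $\inf\emptyset=+\infty$ (which does not occur here since the $A_n$ cover $\Omega$). Then I would check the three defining properties of an outer measure: $\mu^*(\emptyset)=0$, monotonicity, and countable subadditivity, the last via the standard device of choosing, for each $E_j$, a cover whose total $\mu$-mass exceeds $\mu^*(E_j)$ by at most $\varepsilon 2^{-j}$ and amalgamating.

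Next I would verify that $\mu^*$ restricts to $\mu$ on $\mathcal{F}_0$. The inequality $\mu^*(A)\le\mu(A)$ for $A\in\mathcal{F}_0$ is immediate from taking $A$ itself as a cover. For the reverse, given a cover $A\subseteq\bigcup_i B_i$ with $B_i\in\mathcal{F}_0$, one disjointifies to $C_i=B_i\setminus\bigcup_{j<i}B_j\in\mathcal{F}_0$, so $A=\bigsqcup_i(A\cap C_i)$ with $A\cap C_i\in\mathcal{F}_0$, and countable additivity of $\mu$ on the field $\mathcal{F}_0$ gives $\mu(A)=\sum_i\mu(A\cap C_i)\le\sum_i\mu(B_i)$; taking the infimum yields $\mu(A)\le\mu^*(A)$. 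The heart of the argument is then to show every $A\in\mathcal{F}_0$ is Carath\'eodory-measurable, i.e. $\mu^*(T)=\mu^*(T\cap A)+\mu^*(T\cap A^c)$ for every test set $T$: the ``$\le$'' direction is subadditivity, and for ``$\ge$'' one covers $T$ efficiently by $B_i\in\mathcal{F}_0$, splits each as $B_i=(B_i\cap A)\sqcup(B_i\cap A^c)$ inside $\mathcal{F}_0$, and uses finite additivity of $\mu$. By Carath\'eodory's lemma, the family $\mathcal{M}$ of $\mu^*$-measurable sets is a $\sigma$-field on which $\mu^*$ is countably additive; since $\mathcal{F}_0\subseteq\mathcal{M}$ and $\mathcal{F}$ is the minimal $\sigma$-field over $\mathcal{F}_0$, we conclude $\mathcal{F}\subseteq\mathcal{M}$ and $\mu^*|_{\mathcal{F}}$ is an extension of $\mu$ to a measure on $\mathcal{F}$.

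For uniqueness I would exploit $\sigma$-finiteness: disjointify the given cover so that $\Omega=\bigsqcup_n A_n$ with $A_n\in\mathcal{F}_0$ and $\mu(A_n)<\infty$. Let $\nu$ be any measure on $\mathcal{F}$ with $\nu|_{\mathcal{F}_0}=\mu$. Fixing $n$, set $\mathcal{D}_n=\{E\in\mathcal{F}:\nu(E\cap A_n)=\mu^*(E\cap A_n)\}$. Using finiteness of $\mu(A_n)$ to subtract, $\mathcal{D}_n$ is a $\lambda$-system (Dynkin system); it contains the $\pi$-system $\mathcal{F}_0$, on which $\nu$ and $\mu^*$ agree; hence by the $\pi$-$\lambda$ (or monotone class) theorem $\mathcal{D}_n=\mathcal{F}$. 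Summing over $n$ gives $\nu(E)=\sum_n\nu(E\cap A_n)=\sum_n\mu^*(E\cap A_n)=\mu^*(E)$ for all $E\in\mathcal{F}$, proving uniqueness. I expect the main obstacle to be the measurability step: checking the Carath\'eodory criterion for elements of $\mathcal{F}_0$ and then carrying through the proof that $\mathcal{M}$ is a $\sigma$-algebra on which $\mu^*$ is countably additive, since that is where the bookkeeping with arbitrary test sets and coverings is most delicate.
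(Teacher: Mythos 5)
The paper does not prove this statement at all: Theorem~\ref{cara} is quoted verbatim from the reference \cite{1} (Athreya--Lahiri) and is used as a known tool, so there is no internal proof to compare against. Your proposal is the standard and correct argument --- the Carath\'eodory outer measure $\mu^*$ built from countable $\mathcal{F}_0$-covers, verification that $\mu^*$ agrees with $\mu$ on $\mathcal{F}_0$ (which, as you note, uses countable additivity of $\mu$ on the field after disjointifying the cover), the Carath\'eodory measurability criterion for elements of $\mathcal{F}_0$, and then restriction of $\mu^*$ to $\mathcal{F}\subseteq\mathcal{M}$; the uniqueness step via the $\lambda$-systems $\mathcal{D}_n=\{E\in\mathcal{F}:\nu(E\cap A_n)=\mu^*(E\cap A_n)\}$ and the $\pi$-$\lambda$ theorem is exactly where the $\sigma$-finiteness hypothesis enters, and your summation over the disjointified $A_n$ closes the argument correctly. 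This is essentially the proof found in the cited textbook, so no gap --- just be aware that in the paper's context the theorem is an imported black box rather than something the author establishes.
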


\begin{defn}\label{Kolm} For each $\Lambda \in \mathcal{N}$ let $\mu_{\Lambda}$ be a measure. The family of measures $\left\{\mu_{\Lambda}\right\}_{\Lambda \in \mathcal{N}}$ is said to be \textbf{consistent (compatible)} if $\mu_{\Lambda}(F_{\Lambda})=\mu_{\Delta}(F_{\Delta})$ for all $F_{\Lambda}=F_{\Delta} \in \mathcal{B}_{\Lambda}$ whenever $\Lambda \subset \Delta$.\end{defn}

Let $\Lambda\in \mathcal{N}$ and $\Delta\subset \Lambda$. If $\mu_{\Lambda}$ is a  measure on $\mathcal{B}_{\Lambda}$, the projection of $\mu_{\Lambda}$ on $\mathcal{B}_{\Delta}$ is measure $\pi_{\Delta}\left(\mu_{\Lambda}\right)$ on $\mathcal{B}_{\Delta}$ defined by
$$
\left[\pi_{\Delta}\left(\mu_{\Lambda}\right)\right](B)=\mu_{\Lambda}\left\{\sigma \in \Omega_{\Lambda}: \ \sigma |_{\Delta} \in B\right\}, \ B \in \mathcal{B}_{\Delta}.
$$
Similarly, if $\mu$ is a  measure on $\mathcal{B}$, the projection of $\mu$ on $\mathcal{B}_{\Lambda}$ is defined by
$$
\left[\pi_{\Lambda}(\mu)\right](B)=\mu\left\{\sigma \in \Omega: \sigma_{\Lambda} \in B\right\}=\mu(\bar{\sigma} |_{\Lambda}=\sigma_{\Lambda} : \sigma_{\Lambda}\in B), \quad B \in \mathcal{B}_{\Lambda}.$$
The following theorem is known:
\begin{thm}\label{Kol}\cite{1}\textbf{(Kolmogorov Extension Theorem)} For each $t$ in the arbitrary index set $T$, let $\Omega_t$ be a complete, separable metric space, with $\mathcal{F}_t$ the class of Borel sets (the $\sigma$-field generated by the open sets).

Assume that for each finite nonempty subset $v$ of $T$, we are given a probability measure $P_v$ on $\mathcal{F}_v$. Assume the $P_v$ are consistent, that is, $\pi_u\left(P_v\right)=P_u$ for each nonempty $u \subset v$.

Then there is a unique probability measure $P$ on $\mathcal{F}=\prod_{t \in T} \mathcal{F}_t$ such that $\pi_v(P)=P_v$ for all $v$.
\end{thm}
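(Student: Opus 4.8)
The plan is to produce $P$ first on the field $\mathcal{C}$ of cylinder sets and then hand the rest to the Carath\'eodory Extension Theorem (Theorem \ref{cara}), using Proposition \ref{L 16.1} to identify $\mathcal{S}(\mathcal{C})$ with $\mathcal{B}=\mathcal{F}$. Since $V$ is countable and the family $\{V_n\}_{n\ge 0}$ is cofinal in $\mathcal{N}$ (Corollary \ref{muhim}), every cylinder in $\mathcal{C}$ can be written as $\pi_{V_n}^{-1}(B)$ for some $n$ and some $B\in\mathcal{B}_{V_n}$; the case of a general index set $T$ reduces to this one because any member of $\prod_{t\in T}\mathcal{F}_t$ depends on only countably many coordinates. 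I would define $P\big(\pi_{V_n}^{-1}(B)\big):=\mu_{V_n}(B)$. To see this is well posed, note that if $\pi_{V_n}^{-1}(B)=\pi_{V_m}^{-1}(B')$ with, say, $n\le m$, then pushing $B'$ down to $V_n$ and invoking the consistency relation $\pi_{V_n}(\mu_{V_m})=\mu_{V_n}$ (Definition \ref{Kolm}) gives $\mu_{V_n}(B)=\mu_{V_m}(B')$. Finite additivity of $P$ on $\mathcal{C}$ is then immediate: any finite disjoint family of cylinders can be re-expressed over a single $V_n$, where additivity is inherited from $\mu_{V_n}$; also $P(\Omega)=\mu_{V_0}(\Omega_{V_0})=1$.

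The crucial step is to verify that $P$ is continuous from above at $\emptyset$ on $\mathcal{C}$, after which Lemma \ref{1.2.8} promotes finite to countable additivity. Suppose, for contradiction, that $A_1\supseteq A_2\supseteq\cdots$ in $\mathcal{C}$ with $\bigcap_n A_n=\emptyset$ but $P(A_n)\ge\varepsilon>0$ for all $n$. After relabelling we may assume $A_n=\pi_{V_n}^{-1}(B_n)$, $B_n\in\mathcal{B}_{V_n}$. The finite product $\Omega_{V_n}=\Phi^{V_n}$ is again a complete separable metric space in which one-point sets are compact, so the inner-regularity argument of Lemma \ref{1.4.11} applies there: pick a compact $K_n\subseteq B_n$ with $\mu_{V_n}(B_n\setminus K_n)<\varepsilon 2^{-n-1}$, put $C_n=\pi_{V_n}^{-1}(K_n)\subseteq A_n$ and $D_n=\bigcap_{i=1}^n C_i$. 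Since $A_n\setminus D_n\subseteq\bigcup_{i=1}^n (A_i\setminus C_i)$, finite subadditivity gives $P(A_n\setminus D_n)<\varepsilon/2$, hence $P(D_n)>\varepsilon/2$ and in particular $D_n\ne\emptyset$. Choosing $\omega^{(n)}\in D_n$ and using that $\omega^{(n)}\in D_n\subseteq D_i$ for $n\ge i$, so that $\omega^{(n)}|_{V_i}\in K_i$ for all $n\ge i$, a diagonal extraction over $i$ produces a subsequence along which $\omega^{(n)}|_{V_i}$ converges in $\Omega_{V_i}$ for every $i$; the limiting configuration $\omega$ then satisfies $\omega|_{V_i}\in K_i\subseteq B_i$ for all $i$ (the $K_i$ being closed), so $\omega\in\bigcap_n A_n$, a contradiction. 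In the special case $|\Phi|<\infty$ this step collapses: $\Omega$ is compact, every cylinder is closed and hence compact, and a decreasing sequence of nonempty compacta has nonempty intersection, so some $A_N$ must already be empty.

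It remains to assemble the pieces. By the previous paragraph and Lemma \ref{1.2.8}, $P$ is a measure on the field $\mathcal{C}$, and it is $\sigma$-finite there since $P(\Omega)=1<\infty$; hence Theorem \ref{cara} furnishes a unique extension of $P$ to a measure on $\mathcal{S}(\mathcal{C})$, which is exactly $\mathcal{B}=\mathcal{F}$ by Proposition \ref{L 16.1}, and this extension is a probability measure because $P(\Omega)=1$. By construction $\pi_{V_n}(P)=\mu_{V_n}$ for every $n$, and for an arbitrary finite $\Lambda\subset V_n$ one further projection together with consistency gives $\pi_\Lambda(P)=\mu_\Lambda$; in the notation of the statement, $\pi_v(P)=P_v$ for every finite nonempty $v$. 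Uniqueness is already part of the conclusion of Theorem \ref{cara}: two probability measures with these marginals agree on the generating field $\mathcal{C}$ and therefore on all of $\mathcal{B}$. The only genuinely delicate point is the continuity-at-$\emptyset$ step, i.e. the compactness and diagonal-subsequence argument; everything else is bookkeeping with the consistency relations and the cofinal family $\{V_n\}$.
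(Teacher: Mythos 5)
Your skeleton is the standard one, and it is worth noting that the paper itself does not prove Theorem~\ref{Kol} at all: it is quoted from \cite{1}, and what the paper actually proves with this strategy is the Cayley-tree analogue, Theorem~\ref{main1}. Your argument is essentially that proof: define the set function on the cylinder field, get well-posedness from consistency (Definition~\ref{Kolm}), check finite additivity, establish continuity from above at $\emptyset$ by compact inner approximation plus a diagonal subsequence, then invoke Lemma~\ref{1.2.8}, Carath\'eodory (Theorem~\ref{cara}), and agreement on the generating field for uniqueness. In the paper's setting (countable vertex set $V$, spin space $\Phi$ finite or denumerable) all of this is sound.

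As a proof of the statement actually posed, however, there is a genuine gap: Theorem~\ref{Kol} concerns an arbitrary index set $T$ and arbitrary complete separable metric coordinate spaces $\Omega_t$, while your argument silently substitutes the paper's space $\Phi^{V}$ with its cofinal family $\{V_n\}$. Two steps carry real weight and are not justified in that generality. First, the inner-regularity step: you need, for the finite Borel measure $P_v$ on the Polish space $\prod_{t\in v}\Omega_t$, a compact $K\subseteq B$ with $P_v(B\setminus K)<\varepsilon$. For the discrete product $\Phi^{V_n}$ this is immediate, and that is all Lemma~\ref{1.4.11} can supply, since its proof is tied to $\Omega=\Phi^{V}$ through Proposition~\ref{L 16.1}; for a general complete separable metric space this is Ulam's tightness theorem, whose proof requires completeness and separability via a total-boundedness construction --- ``one-point sets are compact'' is not the relevant issue and does not yield it. Second, the reduction of an uncountable $T$ to countably many coordinates is asserted in one line but never carried out, and Proposition~\ref{L 16.1} identifies $\mathcal{S}(\mathcal{C})$ with the Borel $\sigma$-field of the metric $\rho$ on $\Phi^{V}$, not with $\prod_{t\in T}\mathcal{F}_t$ for general $T$ and $\Omega_t$. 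The cleaner route is to work directly with the field of cylinders over finite subsets of $T$, observe that any decreasing sequence of cylinders involves only countably many coordinates so the compactness argument applies verbatim, and then let Carath\'eodory extend to $\prod_{t\in T}\mathcal{F}_t$, which that field generates. So the architecture is right, but to prove Theorem~\ref{Kol} as stated you must supply tightness on Polish spaces and the arbitrary-index bookkeeping yourself rather than borrow the paper's lemmas, which are proved only in the Cayley-tree setting.
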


\begin{pro}\label{metric space} $(\Omega, \rho)$ is a separable and complete metric space. \end{pro}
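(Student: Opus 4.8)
The plan is to verify the three metric axioms (routine), then establish separability by exhibiting an explicit countable dense set, and finally prove completeness by a coordinatewise limit argument.

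First I would record that $\rho$ is well defined and bounded, since $0\le\mathcal{X}_{\sigma(x_n)\neq\sigma'(x_n)}\le 1$ gives $\rho(\sigma,\sigma')\le\sum_{n\ge 0}2^{-n}=2$; moreover $\rho(\sigma,\sigma')=0$ forces $\sigma(x_n)=\sigma'(x_n)$ for every $n$, i.e. $\sigma=\sigma'$, symmetry is immediate, and the triangle inequality follows termwise from $\mathcal{X}_{a\neq c}\le\mathcal{X}_{a\neq b}+\mathcal{X}_{b\neq c}$ for $a,b,c\in\Phi$. The one quantitative fact I would isolate for repeated use: if $\rho(\sigma,\sigma')<2^{-m}$ then $\sigma(x_j)=\sigma'(x_j)$ for all $j\le m$ (else the sum already contains a term $2^{-j}\ge 2^{-m}$), and conversely if $\sigma,\sigma'$ agree on $x_0,\dots,x_m$ then $\rho(\sigma,\sigma')\le\sum_{n>m}2^{-n}=2^{-m}$.

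For separability, fix a reference configuration $\sigma^{\ast}\in\Omega$ and let $D$ be the set of configurations coinciding with $\sigma^{\ast}$ at all but finitely many vertices. Since $V$ is countable and $\Phi$ is at most countable, $D$ is a countable union, over finite $F\subset V$, of the at-most-countable sets $\Phi^{F}$, hence countable. Given $\sigma\in\Omega$ and $\varepsilon>0$, choose $m$ with $2^{-m}<\varepsilon$ and let $\tau\in D$ agree with $\sigma$ on $\{x_0,\dots,x_m\}$ and with $\sigma^{\ast}$ elsewhere; the estimate above gives $\rho(\sigma,\tau)\le 2^{-m}<\varepsilon$, so $D$ is dense. (When $|\Phi|<\infty$ one may instead invoke Tychonoff: $\Omega$ is then a countable product of compact metrizable spaces, hence itself compact metrizable, hence separable.)

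For completeness, let $(\sigma^{(k)})_{k\ge 1}$ be $\rho$-Cauchy. For each $m$ pick $N_m$ with $\rho(\sigma^{(k)},\sigma^{(l)})<2^{-m}$ for $k,l\ge N_m$; by the quantitative fact, $\sigma^{(k)}(x_j)=\sigma^{(l)}(x_j)$ for all $j\le m$ and $k,l\ge N_m$. Hence for each fixed $j$ the sequence $(\sigma^{(k)}(x_j))_k$ is eventually constant, and defining $\sigma(x_j)$ to be that eventual value yields $\sigma\in\Omega$; letting $l\to\infty$ gives $\sigma^{(k)}(x_j)=\sigma(x_j)$ for $j\le m$, $k\ge N_m$, so $\rho(\sigma^{(k)},\sigma)\le 2^{-m}$ for $k\ge N_m$, and $\sigma^{(k)}\to\sigma$. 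I expect no real obstacle here; the only points meriting care are the countable-$\Phi$ case of separability (the finite case being immediate from compactness) and making the correspondence between $\rho$-smallness and agreement on an initial block $x_0,\dots,x_m$ fully explicit, since the arguments throughout this section lean on it.
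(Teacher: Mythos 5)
Your proof is correct and follows essentially the same route as the paper: completeness via coordinatewise stabilization of a Cauchy sequence (using the fact that $\rho(\sigma,\sigma')<2^{-m}$ forces agreement on $x_0,\dots,x_m$), and separability via a countable set built from configurations determined on finitely many vertices. Your execution is in fact slightly cleaner, since padding finite configurations with a fixed reference configuration $\sigma^{\ast}$ makes your dense set an honest subset of $\Omega$, whereas the paper's $\Omega^{\ast}=\bigcup_{\Lambda\in\mathcal{N}}\Omega_{\Lambda}$ consists of partial configurations and meets the balls $O_{\varepsilon}(\sigma)$ only after an implicit identification.
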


\begin{proof}
At first we prove that $(\Omega, \rho)$ is the complete metric space. Let $\sigma_i\in \Omega, i\in \mathbb{N}_0$ be a Cauchy sequence of configurations in $\Omega$.
 For each real number $\varepsilon>0$, there exists a natural number $n_0\in\mathbb{N}_0$ such that, for every natural numbers $n, m \geq n_0$, we have $\rho(\sigma_n, \sigma_m)<\varepsilon$. For a fixed vertex $x_j\in V$ we consider $\sigma_i(x_j), i\in \mathbb{N}_0$. We show that there exists $i_0\in \mathbb{N}$ such that $\sigma_s(x_j)=\sigma_t(x_j)$ for all $s,t \geq i_0$. Indeed, suppose that there is a subsequence  $\sigma_{i_q}(x_j)$ with $\sigma_{i_q}(x_j)\neq \sigma_{i_{q+1}}(x_j)$. We can choose $q^{'}\in\mathbb{N}$ with $q^{'}>j$ and $\varepsilon <\frac{1}{2^{j+1}}$. Then by Cauchy sequence we can see easily
$\sigma_{i_q}(x_j)$ and $\sigma_{i_{q+1}}(x_j)$ belong to the same cylinder, i.e. $\sigma_{i_q}(x_j)=\sigma_{i_{q+1}}(x_j)$ which contradicts to the assumption. Put $\sigma_{i_0}(x_j)=\alpha_j^{\ast}$ and $\sigma^{\ast}(x_j)=\alpha_j^{\ast}$. By Cauchy sequence, we can conclude that from a certain term, all terms of the sequence $\sigma_i, i\in \mathbb{N}_0$ belong to the same cylinder. Namely, the sequence converge to $\sigma^{\ast}$. Hence, $(\Omega, \rho)$ is the complete metric space.

Now we show that $(\Omega, \rho)$ is the separable metric space.  Let $\Lambda\in \mathcal{N}$ and $\Omega_{\Lambda}$ is the set of all configuration on $\Lambda$, then $\Omega_{\Lambda}$ is denumerable set. Also $\Omega^{\ast}:=\bigcup_{\Lambda\in\mathcal{N}}\Omega_{\Lambda}$ is also denumerable set. For each $\sigma\in \Omega$ and $\varepsilon>0$ we consider $O_{\varepsilon}(\sigma):=\{\omega\in \Omega | \ \rho(\omega, \sigma)<\varepsilon\}$. Clearly, $O_{\varepsilon}(\sigma)$ contains a cylinder with finite base. By definition of $\Omega^{\ast}$, the base of the cylinder belongs to $\Omega^{\ast}$, i.e. $O_{\varepsilon}(\sigma)\cap \Omega^{\ast}\neq\emptyset$.
\end{proof}
 \begin{thm}\label{main1}\textbf{(Analogue of Kolmogorov Extension Theorem)} Let $\mu_n:=\mu_{V_n}$ on $\mathcal{B}_{V_n}$, $n\in\{0,1,2,...\}$ be a probability measure and $\mu_n$ are consistent, that is, $\pi_{V_i}\left(\mu_j\right):=\pi_i\left(\mu_j\right)=\mu_i$ for each $i<j$. Then there is a unique probability measure $\mu$ on $\mathcal{B}$ such that $\pi_{\Lambda}(\mu)=\mu_{\Lambda}$ for all $\Lambda\in \mathcal{N}$.
\end{thm}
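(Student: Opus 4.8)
The plan is to build the measure $\mu$ on the field of cylinder sets $\mathcal{C}$ directly from the consistent family $\{\mu_n\}$, verify it is well-defined and finitely additive there, then upgrade finite additivity to countable additivity using compactness, and finally invoke Carath\'eodory to extend uniquely to $\mathcal{S}(\mathcal{C})=\mathcal{B}$. First I would observe that every cylinder $A\in\mathcal{C}$ lies in some $\mathcal{C}_m$, i.e.\ $A=\pi_m^{-1}(B)$ for some $B\subseteq\Phi^{m+1}$; since $V_n$ eventually contains $\{x_0,\dots,x_m\}$ for $n$ large, I can realize $A$ as $\bar\sigma$-type set determined by coordinates inside $V_n$, and set $\mu(A):=\mu_n(\{\sigma\in\Omega_{V_n}:\sigma|_{\{x_0,\dots,x_m\}}\in B\})$. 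The consistency hypothesis $\pi_i(\mu_j)=\mu_i$ for $i<j$ is exactly what is needed to see this value does not depend on the choice of $n$ (nor on which finite $\Lambda\in\mathcal{N}$ one uses, after passing to a common $V_n\supseteq\Lambda$), so $\mu$ is a well-defined set function on $\mathcal{C}$; finite additivity on $\mathcal{C}$ follows because finitely many cylinders all live in a common $\mathcal{C}_m$, and there additivity is just additivity of the corresponding $\mu_n$ on subsets of the finite set $\Phi^{m+1}$.

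The heart of the argument is countable additivity on $\mathcal{C}$, and by Lemma~\ref{1.2.8} this reduces to continuity from above at $\emptyset$: if $A_k\in\mathcal{C}$ with $A_k\downarrow\emptyset$, I must show $\mu(A_k)\to 0$. This is the step I expect to be the main obstacle, and I would handle it by a compactness argument. By Proposition~\ref{metric space} the space $(\Omega,\rho)$ is complete and separable; more importantly, each cylinder $A_k$ is both open and closed, and one checks that $\Omega$ (being, when $|\Phi|<\infty$, a product of finite discrete spaces, hence by Tychonoff compact, or in general exhausted by the compact sets supplied by Lemma~\ref{1.4.11}) has the property that the cylinders $A_k$ are compact. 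Suppose $\mu(A_k)\not\to 0$; then $\mu(A_k)\geq\delta>0$ for all $k$, and since each $A_k$ is a nonempty compact set with $A_1\supseteq A_2\supseteq\cdots$, the finite intersection property gives $\bigcap_k A_k\neq\emptyset$, contradicting $A_k\downarrow\emptyset$. (If $|\Phi|=\infty$ one instead first shrinks each $A_k$ to a compact $K_k\subseteq A_k$ with $\mu(A_k\setminus K_k)<\delta 2^{-k-1}$ using Lemma~\ref{1.4.11}-type inner regularity on the finite base, replaces $K_k$ by $\bigcap_{i\le k}K_i$, and runs the same nested-compact-sets argument.) Thus $\mu$ is continuous from above at $\emptyset$, hence countably additive on $\mathcal{C}$.

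Finally, $\mu$ is $\sigma$-finite on $\mathcal{C}$ — indeed it is a probability measure, so $\mu(\Omega)=1<\infty$ and $\Omega\in\mathcal{C}_0\subseteq\mathcal{C}$ — so Theorem~\ref{cara} applies and yields a unique extension of $\mu$ to a measure on the minimal $\sigma$-field over $\mathcal{C}$, which by Proposition~\ref{L 16.1} is exactly $\mathcal{B}$. It remains to check the projection identities $\pi_\Lambda(\mu)=\mu_\Lambda$ for every $\Lambda\in\mathcal{N}$: given such a $\Lambda$, pick $n$ with $\Lambda\subseteq V_n$; for any $B\in\mathcal{B}_\Lambda$ the set $\{\sigma\in\Omega:\sigma_\Lambda\in B\}$ is a cylinder in $\mathcal{C}$ whose $\mu$-value was defined to be $\mu_n(\{\sigma\in\Omega_{V_n}:\sigma|_\Lambda\in B\})=[\pi_\Lambda(\mu_n)](B)$, and by consistency $\pi_\Lambda(\mu_n)=\mu_\Lambda$ (apply consistency down the chain $\Lambda\subseteq V_n$, or note $\pi_\Lambda=\pi_\Lambda\circ\pi_{V_n}$ on the relevant sets). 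Uniqueness is immediate: any probability measure on $\mathcal{B}$ with the prescribed projections agrees with $\mu$ on $\mathcal{C}$, hence on $\mathcal{B}=\mathcal{S}(\mathcal{C})$ by the uniqueness part of Theorem~\ref{cara}. One cautionary remark: the cleanest route uses $|\Phi|<\infty$ (where Tychonoff gives compactness of $\Omega$ outright and $\mathcal{C}$ is countable); for denumerable $\Phi$ one must be a little careful that the $A_k\setminus K_k$ argument stays inside $\mathcal{C}$ or at least controls the measure, which is why I would lean on Lemma~\ref{1.4.11} there.
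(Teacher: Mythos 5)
Your overall architecture coincides with the paper's: define $\mu$ on cylinders through the $\mu_n$, check well-definedness via consistency, establish finite additivity, reduce countable additivity to continuity from above at $\emptyset$ (Lemma~\ref{1.2.8}), and conclude with the Carath\'eodory theorem (Theorem~\ref{cara}) together with Proposition~\ref{L 16.1}. In the case $|\Phi|<\infty$ your compactness shortcut is correct and in fact cleaner than the paper's argument: $\Omega=\Phi^{V}$ is compact by Tychonoff, every cylinder is closed, so a decreasing sequence of nonempty cylinders has nonempty intersection and no inner regularity is needed at all.

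However, the setting of the paper allows $\Phi$ to be denumerable, and there your parenthetical fix has a genuine gap. The legitimate use of Lemma~\ref{1.4.11} at this stage is to the finite-dimensional measures $\mu_{q_k}$ on the bases (note it cannot be applied to $\mu$ itself: $\mu$ is so far only a finitely additive set function on $\mathcal{C}$, not a $\sigma$-finite measure on $\mathcal{B}$, and $\mu(A_k\setminus K_k)$ is not even defined unless $K_k$ is a cylinder). But shrinking the base $A_k^{(q_k)}\subset\Omega_{V_{q_k}}$ to a compact set $C_k$ produces the cylinder $\pi_{V_{q_k}}^{-1}(C_k)$, which is \emph{not} compact in $\Omega$ when $\Phi$ is infinite: it is unconstrained in infinitely many coordinates, each ranging over the non-compact discrete space $\Phi$. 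Hence ``the same nested-compact-sets argument'' (the finite intersection property applied in $\Omega$) does not go through, and this is precisely the point where the paper's proof does extra work: it picks points $\sigma^{(k)}$ in the shrunken cylinders $\overline{\omega}_k$ (nonempty because their $\mu$-measure exceeds $\varepsilon/2$), uses compactness of the bases $C^{q_1},C^{q_2},\dots$ inside the finite-dimensional spaces $\Omega_{V_{q_i}}$ to extract successive convergent subsequences of the projections, and assembles by diagonalization a configuration $\nu\in\bigcap_k\overline{\sigma}_{A_k}$, contradicting $A_k\downarrow\emptyset$. Replacing your nested-compact step by this diagonal argument (or an equivalent argument on the finite-dimensional projections) closes the gap; the remaining parts of your proposal --- well-definedness, finite additivity, $\sigma$-finiteness, the Carath\'eodory extension, the verification of the projections, and uniqueness --- are sound and agree with the paper.
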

\begin{proof} We define the hoped-for measure on cylinders with base $V_n$ by $\mu\left(\bar{\sigma}_{V_n}\right)=\mu_n\left(\sigma_{V_n}\right)$.
At first, we show that this definition makes sense since a given measurable cylinder can be represented in several ways.
%For example, suppose that
%$B^2=(-\infty, 3) \times(4,5)$. Then
%$$
%\begin{aligned}
%B^2\left(t_1, t_2\right) & =\left\{\omega: \omega\left(t_1\right)<3, \quad 4<\omega\left(t_2\right)<5\right\} \\
%& =\left\{\omega: \omega\left(t_1\right)<3, \quad 4<\omega\left(t_2\right)<5, \quad \omega\left(t_3\right) \in \mathbb{R}\right\} \\
%& =B^3\left(t_1, t_2, t_3\right) \quad \text { where } \quad B^3=(-\infty, 3) \times(4,5) \times \mathbb{R} .
%\end{aligned}
%$$
It is sufficient to consider dual representation of the same measurable cylinder in the form $\bar{\sigma}_{V_i}=\bar{\sigma}_{V_j}$ where $i<j$. Then by the consistency hypothesis, we obtain that $\mu_i\left(\sigma_{V_i}\right)=\left[\pi_i\left(\mu_j\right)\right]\left(\sigma_{V_i}\right)$. Also, by definition of projection one gets
$$\left[\pi_i\left(\mu_j\right)\right]\left(\sigma_{V_i}\right)=\mu_j\left\{\sigma \in \Omega_{V_j}: \sigma |_{V_i} =\sigma_{V_i}\right\}.$$
But the assumption  $\bar{\sigma}_{V_i}=\bar{\sigma}_{V_j}$  implies that if $\sigma \in \bar{\sigma}_{V_j}$, then $\sigma \in \bar{\sigma}_{V_i}$ iff $\sigma |_{V_i}=\sigma_{V_i}$, hence $\mu_i\left(\sigma_{V_i}\right)=\mu_j\left(\sigma_{V_j}\right)$, as desired.

Thus, $\mu$ is well-defined on measurable cylinders with base $V_n$ and it's easy to check that the class of measurable cylinders with base $V_n$.
 %forms a field, and by Corollary \ref{muhim} we have $\sigma\left(\left\{\bar{\sigma}_{V_n}: n\in \mathbb{N}_{0}\right\}\right)=\mathcal{B}$.

Let $\mathcal{F}$ be the set of finite union of all measurable cylinders with base $W_n, n\in\mathbb{N}_0$. Then $\mathcal{F}$ is a field. Now if $\overline{\sigma}^{(q)}_{A_1}, \overline{\sigma}^{(q)}_{A_2}, \ldots, \overline{\sigma}^{(q)}_{A_m}$ are disjoint sets in $\mathcal{F}$, where bases of the cylinders are subset of $V_q$.  Thus, by definition of $\mu$ and $\mu_q$ is a measure we obtain
$$\mu\left(\bigcup_{i=1}^m \overline{\sigma}^{(q)}_{A_i}\right)=\mu_q\left(\bigcup_{i=1}^m \sigma^{(q)}_{A_i}\right) =\sum_{i=1}^m\mu_q\left( \sigma^{(q)}_{A_i}\right)=\sum_{i=1}^m \mu\left(\overline{\sigma}^{(q)}_{A_i}\right).
$$
Therefore $\mu$ is finitely additive on $\mathcal{F}$.

Now we show that $\mu$ is countably additive on $\mathcal{F}$. Let $\overline{\sigma}_{A_k}, k=1,2, \ldots$ be a sequence of measurable cylinders decreasing to $\emptyset$. If $\mu\left(\overline{\sigma}_{A_k}\right)$ does not approach 0, we have, for some $\varepsilon>0, \mu\left(\overline{\sigma}_{A_k}\right) \geq \varepsilon>0$ for all $k$. Suppose $\overline{\sigma}_{A_k}=\overline{\sigma}_{A^{(q_k)}_k}$; by tacking on extra factors, we may assume that the numbers $q_k$ increase with $k$.

It follows from Lemma \ref{1.4.11} that we can find a compact set $\sigma_{C^{(q_k)}_k} \subset \sigma_{A^{(q_k)}_k}$ such that  $\mu_{q_k}\left(\sigma_{A^{(q_k)}_k}-\sigma_{C^{(q_k)}_k}\right)<\varepsilon / 2^{k+1}$.  Put
$$\overline{\omega}_k=\overline{\sigma}_{C_1} \cap \overline{\sigma}_{C_2} \ \cap \ \cdots \ \cap \ \overline{\sigma}_{C_k}\subset \overline{\sigma}_{A_1} \cap \overline{\sigma}_{A_2} \ \cap \ \cdots \ \cap \ \overline{\sigma}_{A_k}=\overline{\sigma}_{A_k}.$$
Then
$$\mu\left(\overline{\sigma}_{A_k}-\overline{\omega}_k\right)=
\mu\left(\overline{\sigma}_{A_k} \cap \bigcup_{i=1}^k (\overline{\sigma}_{C_k})^{c}\right) \leq \sum_{i=1}^k \mu\left(\overline{\sigma}_{A_k} \cap (\overline{\sigma}_{C_i})^{c}\right) \leq $$ $$\leq\sum_{i=1}^k \mu\left(\overline{\sigma}_{A_i}-\overline{\sigma}_{C_i}\right)<\sum_{i=1}^k 2 \varepsilon^{i+1}<\frac{\varepsilon}{2}.$$
Since $\overline{\omega}_{k}\subset \overline{\sigma}_{C_k}\subset \overline{\sigma}_{A_k}$ we obtain $$\mu\left(\overline{\sigma}_{A_k}-\overline{\omega}_{k}\right)=\mu\left(\overline{\sigma}_{A_k}\right)-\mu\left(\overline{\omega}_{k}\right).$$
Consequently, $\mu\left(\overline{\omega}_{k}\right)>\mu\left(\overline{\sigma}_{A_k}\right)-\frac{\varepsilon}{2}$. In particular, $\overline{\omega}_{k}$ is not empty.

Now pick $\sigma^{(k)} \in \overline{\omega}_k, k=1,2, \ldots $. Let $\overline{\sigma}_{C^{q_1}}=\overline{\sigma}^{(q_1)}_{C^{q_1}}$ [note all $\overline{\omega}_k \subset \overline{\sigma}^{(q_1)}_{C^{q_1}}$]. Consider the sequence
$$
\left(\sigma_{x_1}^1, \ldots, \sigma_{x_{v_1}}^1\right), \quad\left(\sigma_{x_1}^2, \ldots, \sigma_{x_{v_1}}^2\right), \quad \left(\sigma_{x_1}^3, \ldots, \sigma_{x_{v_1}}^3\right), \ldots,
$$
that is, $\sigma^1(v_1), \sigma^2(v_1), \sigma^3(v_1), \ldots$, where $v_{i}=|V_{q_i}|$.
Since $\sigma^n(v_{q_1})$ belong to $C^{q_1}$, a compact subset of $\Omega_{V_{q_1}}$, we have a convergent subsequence $\sigma^{j}(v_{q_1})$ approaching some $\xi(v_{q_1}) \in C^{q_1}$. If $A_2^{\prime}=C^{q_2}$ (so $\overline{\omega}_k \subset A_2^{\prime}$ for $k \geq 2$), consider the sequence $\sigma^{r_{11}}(v_{q_2}), \sigma^{r_{12}}(v_{q_2}), \sigma^{r_{13}}(v_{q_2}), \ldots \in C^{q_2}$ (eventually), and extract a convergent subsequence $\sigma^{r_{21}}(v_{q_2}) \rightarrow \xi(v_{q_2}) \in C^{q_2}$.

Note that $\xi(v_{q_2})|_{V_{q_1}}=\xi(v_{q_1})$; as $n \rightarrow \infty$, the left side approaches $\xi(v_{q_2})|_{V_{q_1}}$, and as $\left\{r_{2 n}\right\}$ is a subsequence of $\left\{r_{1 n}\right\}$, the right side approaches $x_{v_1}$. Hence $\xi(v_{q_2})|_{V_{q_1}}=\xi(v_{q_1})$.
Continue in this fashion; at step $i$ we have a subsequence
$$
\sigma^{r_{i n}}(v_{q_i}) \rightarrow \xi(v_{q_i}) \in C^{q_i},  \text { and } \xi(v_{q_i})|_{V_{q_j}}=\xi(v_{q_j}) \ \text {for} \ j<i.
$$
Pick any $\nu \in \Omega$ such that $\nu |_{V_{q_j}}=\xi(v_{q_j})$ for all $j=1,2, \ldots$ (such a choice is possible since $\xi(v_{q_i})|_{V_{q_j}}=\xi(v_{q_j}), j<i)$. Then $\nu |_{V_{q_j}}\in C^{q_j}$, for each $j$; hence
$$
\nu \in \bigcap_{j=1}^{\infty} \overline{\sigma}_{C_j} \subset \bigcap_{j=1}^{\infty} \overline{\sigma}_{A_j}=\emptyset,
$$
a contradiction. Thus $\mu$ extends to a measure on $\mathcal{F}$, and by construction, $\pi_{V_n}(\mu)=\mu_n$ for all $n\in \mathbb{N}_0$.

Finally, if $\mu_1$ and $\mu_2$ are two probability measures on $\mathcal{F}$ such that $\pi_{V_n}(\mu_1)=\pi_{V_n}(\mu_2)$ for all $n\in \mathbb{N}_0$,
then for any $B^n \in \mathcal{F}_v$,
$$
\mu_1\left(\overline{\sigma}_{V_n}\right)=\left[\pi_{V_n}(\mu_1)\right]\left(\sigma_{V_n}\right)
=\left[\pi_{V_n}(\mu_2)\right]\left(\sigma_{V_n}\right)=\mu_2\left(\overline{\sigma}_{V_n}\right).
$$
By Carath\'{e}odory extension theorem and Corollary \ref{muhim} we can conclude that $\mu_1(A)=\mu_2(A)$ for all $A\in \mathcal{B}$. \end{proof}

\section{Kolmogorov's extension theorem for non-probability measures}

There are several versions of Kolmogorov's extension theorem for probability measures. But some problems reduced to Kolmogorov's theorem for non-probability measures (e.g., \cite{3}). Actually, we can not apply the theorem for any infinite measures. But in this section we give Kolmogorov's extension theorem for a certain class of infinite measures.

In this section, we use notations of the previous sections.
Note that $\Omega_n=\Omega_{V_n}$ and $\mathcal{B}_n$ is the $\sigma$-ring of all Borel sets of $\Omega_n$. Also, $\mu_n$ is a measure on $(\Omega_n, \mathcal{B}_n)$, $n\in \mathbb{N}$.

 \begin{pro}\label{3.1.} For a consistent family of finite measures $\left\{\mu_n\right\}_{n=1}^{\infty}$, the extension is uniquely possible.
\end{pro}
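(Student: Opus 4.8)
The plan is to reduce the statement to the probability-measure case already handled in Theorem \ref{main1} by a normalization argument. The key preliminary remark is that consistency forces all the total masses $\mu_n(\Omega_n)$ to coincide: applying the identity $\pi_i(\mu_j)=\mu_i$ to the set $\Omega_i\in\mathcal{B}_i$ gives
$$\mu_i(\Omega_i)=\bigl[\pi_i(\mu_j)\bigr](\Omega_i)=\mu_j\{\sigma\in\Omega_j:\sigma|_{V_i}\in\Omega_i\}=\mu_j(\Omega_j)$$
for all $i<j$, so $c:=\mu_n(\Omega_n)$ does not depend on $n$, and $c<\infty$ since each $\mu_n$ is finite. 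I would first dispose of the degenerate case $c=0$: then every $\mu_n$ is the zero measure, and the zero measure on $\mathcal{B}$ is the (obviously unique) extension.

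Assuming $c>0$, I would set $\tilde\mu_n:=c^{-1}\mu_n$. Each $\tilde\mu_n$ is then a probability measure on $\mathcal{B}_n$, and since the projection operators act linearly on measures, $\pi_i(\tilde\mu_j)=c^{-1}\pi_i(\mu_j)=c^{-1}\mu_i=\tilde\mu_i$, so $\{\tilde\mu_n\}$ remains a consistent family. Theorem \ref{main1} then supplies a unique probability measure $\tilde\mu$ on $\mathcal{B}$ with $\pi_{V_n}(\tilde\mu)=\tilde\mu_n$ for every $n$. Rescaling back, $\mu:=c\,\tilde\mu$ is a finite measure on $\mathcal{B}$ and, again by linearity of the projections, $\pi_{V_n}(\mu)=c\,\pi_{V_n}(\tilde\mu)=c\,\tilde\mu_n=\mu_n$ for all $n$; this is the desired extension (and, setting $\mu_\Lambda:=\pi_\Lambda(\mu_n)$ for any $V_n\supseteq\Lambda$, which is well defined by consistency, one also gets $\pi_\Lambda(\mu)=\mu_\Lambda$ for all $\Lambda\in\mathcal{N}$).

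For uniqueness I would argue as follows: if $\mu'$ is any measure on $\mathcal{B}$ with $\pi_{V_n}(\mu')=\mu_n$ for all $n$, then taking $n=0$ gives $\mu'(\Omega)=[\pi_{V_0}(\mu')](\Omega_0)=\mu_0(\Omega_0)=c<\infty$, so $\mu'$ is finite; hence $c^{-1}\mu'$ is a probability measure on $\mathcal{B}$ with the same projections as $\tilde\mu$, and the uniqueness part of Theorem \ref{main1} forces $c^{-1}\mu'=\tilde\mu$, i.e. $\mu'=\mu$. Alternatively, since $\mu$ is finite it is $\sigma$-finite on the field of cylinders, so uniqueness is immediate from the Carath\'{e}odory extension theorem (Theorem \ref{cara}) together with Corollary \ref{muhim}.

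I do not expect a genuine obstacle here: the substantive work was already done in Theorem \ref{main1}, and the only points that require a moment's care are the degenerate case $c=0$ and checking that normalization and rescaling genuinely commute with every projection $\pi_\Lambda$ — which is clear, since each $\Lambda\in\mathcal{N}$ lies inside some $V_n$ and $\pi_\Lambda=\pi_\Lambda\circ\pi_{V_n}$ at the level of measures.
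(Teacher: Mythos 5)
Your proof is correct and follows essentially the same route as the paper: show via consistency that $c=\mu_n(\Omega_n)$ is constant, normalize to the probability measures $c^{-1}\mu_n$, apply Theorem \ref{main1}, and rescale by $c$ to obtain the unique extension. The only additions are your explicit treatment of the degenerate case $c=0$ and the more detailed uniqueness check, which the paper leaves implicit.
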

 \begin{proof}
 Put $c_n=\mu_n\left(\Omega_n\right)<\infty$. From the consistency condition, $c_n$ does not depend on $n$, so we put this common value as $c$. Then, the measures $\bar{\mu}_n=c^{-1}\mu_n$ form a self-consistent family of probability measures, then by Theorem \ref{main1}, $\left\{\bar{\mu}_n\right\}_{n=1}^{\infty}$ can be extended uniquely to a $\sigma$-additive measure $\bar{\mu}$. It is obvious that the $\sigma$-additive measure $c \bar{\mu}$ is a unique extension of measures $\left\{\mu_n\right\}_{n=1}^{\infty}$.
\end{proof}

Let $\left\{\mu_n\right\}_{n=1}^{\infty}$ be a consistent family of infinite measures. As defined in the previous sections, we put $\mathcal{C}=\bigcup_n \pi_n^{-1}\left(\mathcal{C}_n\right)$ and $\mathcal{B}$ is the $\sigma$-field generated by $\mathcal{C}$. Then, the family $\left\{\mu_n\right\}_{n=1}^{\infty}$ defines a finitely additive measure $\mu$ on $\mathcal{C}$.

Suppose that $A \in \mathcal{C}$ and $\mu(A)<\infty$, namely suppose that $A=\pi_n^{-1}\left(A_n\right)$, $A_n \in \mathcal{B}_n$ and $\mu_n\left(A_n\right)<\infty$, then the measures
\begin{equation}\label{e2.1}\mu_k^{(A)}\left(E_k\right)=\mu_m\left(\pi_{m}^{-1}\left(E_k\right) \cap \pi_{m}^{-1}\left(A_n\right)\right) \ \text{for} \ E_k \in \mathcal{B}_k, \ m \geq \max(n, k)\end{equation}
form a consistent family of finite measures, so that $\left\{\mu_k^{(A)}\right\}$ can be extended uniquely to a $\sigma$-additive measure $\mu^{(A)}$ on $\mathcal{B}$.

For $A^{\prime} \supset A$, it is easily seen that we have
$$ \mu^{(A)}(E)=\mu^{\left(A^{\prime}\right)}(E \cap A) \ \text {for} \ E \in \mathcal{B}.$$
Namely,
\begin{equation}\label{e2.2}\mu^{\left(A^{\prime}\right)}(E \cap A)=\mu(E \cap A) \ \text {for} \ E \in \mathcal{C}.\end{equation}
Put $$\mathcal{B}_0=\left\{B \in \mathcal{B}; \  \exists \ A_1, A_2, \ldots \in \mathcal{C}, \mu\left(A_n\right)<\infty, B \subset \bigcup_{n=1}^{\infty} A_n\right\}.$$
  In this definition, $\left\{A_n\right\}$ can be supposed to be mutually disjoint. We shall always impose this additional condition on $\left\{A_n\right\}$.

For $B \in \mathcal{B}_0$, we define a $\sigma$-additive measure $\mu^{(B)}$ as follows:
\begin{equation}\label{e2.3} \mu^{(B)}(E)=\sum_{n=1}^{\infty} \mu^{\left(A_n\right)}(E \cap B) \ \text { for } \ E \in \mathcal{B}.\end{equation}

This $\mu^{(B)}$ is a $\sigma$-additive measure because every term in the right side is so.

Now, we shall remark that the measure $\mu^{(B)}$ does not depend on the choice of $\left\{A_n\right\}$. Suppose that $B \subset \cup_n A_n$ and $B \subset \cup_n A_n^{\prime}$. Then, from the $\sigma$-additivity of the measure $\mu^{\left(A_n\right)}$, we have
$$
\mu^{\left(A_n\right)}(E \cap B)=\sum_{k=1}^{\infty} \mu^{\left(A_n\right)}\left(E \cap B \cap A_k^{\prime}\right)
$$
but in virtue of $(\ref{e2.2})$, the right side is equal to
$$
\sum_{k=1}^{\infty} \mu^{\left(A_n\right)}\left(E \cap B \cap A_k^{\prime} \cap A_n\right)=\sum_{k=1}^{\infty} \mu^{\left(A_k^{\prime} \cap A_n\right)}(E \cap B)
$$
Thus, the right side of $(\ref{e2.3})$ is equal to
$$
\sum_{n=1}^{\infty} \sum_{k=1}^{\infty} \mu^{\left(A_k^{\prime} \cap A_n\right)}(E \cap B).
$$
This assures the independence of $\mu^{(B)}$ from the choice of $\left\{A_n\right\}$.

If $B, B^{\prime} \in \mathcal{B}_0$ and $B \subset B^{\prime}$, we have
\begin{equation}\label{e2.4} \mu^{(B)}(E)=\mu^{\left(B^{\prime}\right)}(E \cap B) \ \textrm{for} \  E \in \mathcal{B}.\end{equation}
But even if $E \cap B \in \mathcal{C}$, there is a question whether the right side of (\ref{e2.4}) is equal to $\mu(E \cap B)$ or not. We shall discuss it later.

 If the measures $\left\{\mu_n\right\}_{n=1}^{\infty}$ can be extended to a $\sigma$-additive measure $\bar{\mu}$ on $\mathcal{B}$, we must have from $(\ref{e2.2})$
$$\bar{\mu}(E \cap A)=\mu^{(A)}(E) \ \text {for} \ E\in \mathcal{B}, \ A \in \mathcal{C}, \ \mu(A)<\infty.$$
So, also for $B \in \mathcal{B}_0$ we have
\begin{equation}\label{e3.5}
\bar{\mu}(E \cap B)=\sum_{n=1}^{\infty} \bar{\mu}\left(E \cap B \cap A_n\right)=\sum_{n=1}^{\infty} \mu^{\left(A_n\right)}(E \cap B)=\mu^{(B)}(E).
\end{equation}
From (\ref{e3.5}) we have
\begin{equation}\label{e3.6} \bar{\mu}(E)=\mu^{(\Omega)}(E)\ \textrm{for any} \ E \in \mathcal{B}, \end{equation}
thus we have $\bar{\mu}=\mu^{(\Omega)}$. Hence, we can conclude the following Proposition.
\begin{pro} \label{Prop 2.2.} If $\Omega \in \mathcal{B}_0$, the extension (if possible) is unique. \end{pro}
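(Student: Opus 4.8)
The plan is to observe that the chain of identities already displayed in~(\ref{e2.2})--(\ref{e3.6}) is essentially the whole argument: it shows that \emph{every} $\sigma$-additive extension of $\{\mu_n\}_{n=1}^{\infty}$ to $\mathcal{B}$ is forced to coincide with the one specific measure $\mu^{(\Omega)}$. So the first step is to fix an arbitrary $\sigma$-additive measure $\bar{\mu}$ on $\mathcal{B}$ with $\pi_n(\bar{\mu})=\mu_n$ for all $n$, and then to run those identities carefully for this particular $\bar{\mu}$, making sure no step uses more than countable additivity and the assumed consistency.

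The one point that needs to be nailed down is the claim that for every $A\in\mathcal{C}$ with $\mu(A)<\infty$ one has $\bar{\mu}(E\cap A)=\mu^{(A)}(E)$ for all $E\in\mathcal{B}$. To see this I would check that the set function $E\mapsto\bar{\mu}(E\cap A)$ is a finite $\sigma$-additive measure on $\mathcal{B}$ --- it is finite since it takes the value $\bar{\mu}(A)=\mu(A)<\infty$ at $\Omega$ --- and that on the generating field $\mathcal{C}$ it agrees with the finitely additive measure determined by the consistent finite family $\{\mu_k^{(A)}\}$ of~(\ref{e2.1}), because $\bar{\mu}$ restricts to $\mu_m$ on each $\pi_m^{-1}(\mathcal{B}_m)$. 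Since, by Proposition~\ref{3.1.} (equivalently by the Carath\'eodory theorem~\ref{cara}), a consistent family of finite measures has a \emph{unique} $\sigma$-additive extension to $\mathcal{B}$, namely $\mu^{(A)}$, the two measures must agree on all of $\mathcal{B}$.

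Granting this, the remainder is bookkeeping with countable additivity. Given $B\in\mathcal{B}_0$, choose mutually disjoint $A_1,A_2,\ldots\in\mathcal{C}$ with $\mu(A_n)<\infty$ and $B\subset\bigcup_{n}A_n$; then
$$\bar{\mu}(E\cap B)=\sum_{n=1}^{\infty}\bar{\mu}(E\cap B\cap A_n)=\sum_{n=1}^{\infty}\mu^{(A_n)}(E\cap B)=\mu^{(B)}(E),$$
which is precisely~(\ref{e3.5}). Specializing to $B=\Omega$, which is permitted exactly because of the hypothesis $\Omega\in\mathcal{B}_0$, yields $\bar{\mu}(E)=\mu^{(\Omega)}(E)$ for every $E\in\mathcal{B}$, i.e.\ $\bar{\mu}=\mu^{(\Omega)}$. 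As $\bar{\mu}$ was an arbitrary extension, any two extensions coincide, which is the asserted uniqueness.

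I expect the only delicate step to be the identification $\bar{\mu}(E\cap A)=\mu^{(A)}(E)$, where one must be slightly careful to verify that restricting $\bar{\mu}$ to $A$ produces a $\sigma$-additive finite measure that agrees on the field $\mathcal{C}$ with the family $\{\mu_k^{(A)}\}$ built in the text, so that the uniqueness clause of Proposition~\ref{3.1.} genuinely applies. Everything after that is automatic, and it is worth emphasizing that no existence assertion is being made here: the proposition only says that \emph{if} an extension exists, it is unique.
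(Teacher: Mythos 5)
Your argument is correct and takes essentially the same route as the paper: the chain of identities (\ref{e2.2})--(\ref{e3.6}) displayed just before the proposition is exactly this computation, forcing any $\sigma$-additive extension $\bar{\mu}$ to coincide with $\mu^{(\Omega)}$ once $\Omega\in\mathcal{B}_0$. Your added justification of the key step $\bar{\mu}(E\cap A)=\mu^{(A)}(E)$ via the uniqueness clause of Proposition~\ref{3.1.} merely spells out a point the paper asserts without detail, so there is no substantive difference.
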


The possibility of extension depends on whether $\mu^{(\Omega)}$ is identical with $\mu$ or not on $\mathcal{C}$. Namely, the condition:
\begin{equation}\label{e2.7} \mu^{(\Omega)}(E)=\mu(E) \ \textrm{for} \ E \in\mathcal{C} \end{equation}
is necessary and sufficient for the unique extension of $\left\{\mu_n\right\}$ to a $\sigma$-additive measure. (\ref{e2.7}) means
\begin{equation}\label{e2.7.}  \mu(E)=\sum_{n=1}^{\infty} \mu\left(E \cap A_n\right) \ \textrm{for} \ E \in \mathcal{C},\end{equation}
where $\left\{A_n\right\}$ is such that $A_n \in \mathcal{C}, \mu\left(A_n\right)<\infty$ and $\Omega=\bigcup_{n=1}^{\infty} A_n$.

If all $A_n$ can be chosen in $\pi_m^{-1}\left(\mathcal{B}_m\right)$ for fixed $m$, the condition (\ref{e2.7.}) is satisfied in virtue of the $\sigma$-additivity of $\mu_m$. Therefore we have

\begin{thm}\label{Theorem 3.1.} If one of measures $\left\{\mu_n\right\}_{n=1}^{\infty}$, say $\mu_{n_0}$, is $\sigma$-finite, then $\left\{\mu_n\right\}_{n=1}^{\infty}$ can be extended uniquely to a $\sigma$-additive measure on $\mathcal{B}$.
\end{thm}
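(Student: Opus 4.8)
The plan is to exhibit an explicit disjoint countable cover of $\Omega$ by cylinder sets of finite $\mu$-measure, to verify for that cover the criterion $(\ref{e2.7.})$, and then to invoke the necessary-and-sufficient characterization established just before the statement.

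First I would use the $\sigma$-finiteness of $\mu_{n_0}$ to write $\Omega_{n_0}=\bigcup_{k=1}^{\infty}B_k$ with $B_k\in\mathcal{B}_{n_0}$ and $\mu_{n_0}(B_k)<\infty$; replacing $B_k$ by $B_k\setminus\bigcup_{j<k}B_j$ we may take the $B_k$ pairwise disjoint. Put $A_k=\pi_{n_0}^{-1}(B_k)$. Then the $A_k$ are pairwise disjoint, lie in $\mathcal{C}$, satisfy $\mu(A_k)=\mu_{n_0}(B_k)<\infty$, and $\bigcup_k A_k=\Omega$; in particular $\Omega\in\mathcal{B}_0$, so Proposition \ref{Prop 2.2.} already guarantees that an extension, if it exists, is unique, and it only remains to produce one.

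Next I would check $(\ref{e2.7.})$ for this cover. Fix $E\in\mathcal{C}$ and write $E=\pi_m^{-1}(E_m)$ with $E_m\in\mathcal{B}_m$; enlarging $m$ and using the consistency of the $\mu_n$ we may assume $m\ge n_0$. Set $\widetilde{B}_k=\{\sigma\in\Omega_m:\ \sigma|_{V_{n_0}}\in B_k\}\in\mathcal{B}_m$, so that $A_k=\pi_m^{-1}(\widetilde{B}_k)$ and hence $E\cap A_k=\pi_m^{-1}(E_m\cap\widetilde{B}_k)$. The sets $\widetilde{B}_k$ are pairwise disjoint, and $\bigcup_k\widetilde{B}_k=\Omega_m$ because $\bigcup_k B_k=\Omega_{n_0}$; therefore, using that $\mu_m$ is a measure,
\[
\sum_{k=1}^{\infty}\mu(E\cap A_k)=\sum_{k=1}^{\infty}\mu_m(E_m\cap\widetilde{B}_k)=\mu_m\!\left(E_m\cap\bigcup_{k=1}^{\infty}\widetilde{B}_k\right)=\mu_m(E_m)=\mu(E).
\]
This is exactly $(\ref{e2.7.})$, equivalently $(\ref{e2.7})$; as explained before the statement, $(\ref{e2.7})$ is necessary and sufficient for $\{\mu_n\}$ to admit a $\sigma$-additive extension to $\mathcal{B}$, the extension being $\mu^{(\Omega)}$ of $(\ref{e2.3})$. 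Together with the uniqueness noted above, this finishes the argument.

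The only delicate point I anticipate is the bookkeeping with levels: one must represent an arbitrary $E\in\mathcal{C}$ together with all the $A_k$ at a common level $m\ge n_0$, and verify that lifting the disjoint cover $\{B_k\}$ of $\Omega_{n_0}$ to level $m$ still yields a disjoint cover of $\Omega_m$, so that countable additivity of $\mu_m$ may be applied termwise. Everything else reduces to a direct appeal to Proposition \ref{Prop 2.2.} and to the criterion already in place.
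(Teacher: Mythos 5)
Your argument is correct and is essentially the paper's own proof: the paper likewise uses the $\sigma$-finiteness of $\mu_{n_0}$ to choose all the sets $A_k$ in the fixed level $\pi_{n_0}^{-1}(\mathcal{B}_{n_0})$ and then observes that condition (\ref{e2.7.}) holds by the $\sigma$-additivity of $\mu_m$, which together with the criterion (\ref{e2.7}) and Proposition \ref{Prop 2.2.} gives existence and uniqueness. You have merely written out explicitly the level-matching bookkeeping that the paper leaves as a one-line remark.
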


\section*{Acknowledgements}
The work supported by the fundamental project (number: F-FA-2021-425)  of The Ministry of Innovative Development of the Republic of Uzbekistan. I thank Professor U.A.Rozikov for useful discussions and suggestions
%and the referee for careful reading of the manuscript; in particular, for a number of comments
which have improved the paper.

\section*{Statements and Declarations}

{\bf	Conflict of interest statement:}
The author states that there is no conflict of interest.

\section*{Data availability statements}
The datasets generated during and/or analysed during the current study are available from the corresponding author on reasonable request.

\end{document}